\newtheorem{thm}{Theorem}[section]
\newtheorem{lem}[thm]{Lemma}
\newtheorem{cor}[thm]{Corollary}
\newtheorem{prop}[thm]{Proposition}
\newtheorem{conjecture}[thm]{Conjecture}
\newtheorem{question}[thm]{Question}
\newcommand{\A}{\mathcal{A}}
\title{Long geodesics in subgraphs of the cube}
\author{
Imre Leader\thanks{Department of Pure Mathematics and Mathematical Statistics, Centre for Mathematical Sciences, 
Cambridge CB3 0WB, United Kingdom. E-mail: I.Leader@dpmms.cam.ac.uk} \and 
Eoin Long\thanks{School of Mathematical Sciences, Queen Mary University of London, Mile End Road, London E1 4NS, United Kingdom. E-mail: E.P.Long@qmul.ac.uk}
}
\begin{document}

%
% Title
%
\maketitle

%
% Abstract
%

\begin{abstract}
A path in the hypercube $Q_n$ is said to be a geodesic if no two of its edges are in 
the same direction. Let $G$ be a subgraph of $Q_n$ with average degree $d$. How long a 
geodesic must $G$ contain? We show that $G$ must contain a geodesic of length $d$. This 
result, which is best possible, strengthens a theorem of Feder and Subi. It is also 
related to the `antipodal colourings' conjecture of Norine.
\end{abstract}

%
%
% First Section:
%
% Introduction
%
%
%

\section{Introduction}

Given a graph $G$ of average degree $d$, a classic result of Dirac \cite{dirac} guarantees 
that $G$ contains a path of length $d$. Moreover, for general graphs this is the best 
possible bound, as can be seen by taking $G$ to be $K_{d+1}$, the complete graph on $d+1$ 
vertices. 

The hypercube $Q_n$ has vertex set $\{0,1\}^n$ and two vertices $x,y\in Q_n$ are joined by 
an edge if they differ on a single coordinate. In \cite{long} the second author considered 
a similar question for subgraphs of the hypercube $Q_n$. That is, given a subgraph $G$ of 
$Q_n$ of average degree $d$, how long a path must $G$ contain? The main result 
was the following:

\begin{thm}[\cite{long}]
\label{previous}
Every subgraph $G$ of $Q_n$ of minimum degree $d$ contains a path of length $2^d-1$.
\end{thm}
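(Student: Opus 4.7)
The plan is to prove this by induction on $d$, with the following strengthened statement: \emph{for every subgraph $G\subseteq Q_n$ of minimum degree $d$ and every vertex $v\in V(G)$, there is a path of length $2^d-1$ in $G$ starting at $v$.} The base cases $d\le 1$ are immediate: for $d=0$ the single vertex $v$ counts as a path of length $0$, and for $d=1$ any $G$-neighbour of $v$ provides a path of length $1$.

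For the inductive step, fix $v\in V(G)$. Since $\deg_G(v)\ge d\ge 1$, I can choose a coordinate $i$ such that $v$ has a $G$-neighbour $v'$ differing from $v$ only in coordinate $i$. I then split $Q_n$ along coordinate $i$ into two $(n-1)$-dimensional subcubes $H_0,H_1$ with $v\in H_0$ and $v'\in H_1$, and let $G_j$ be the subgraph of $G$ induced on $V(G)\cap H_j$. Every vertex of $G_j$ loses at most one neighbour in passing from $G$ to $G_j$ (namely its partner across direction $i$), so $G_0$ and $G_1$ both have minimum degree at least $d-1$. The inductive hypothesis then yields a path $P_0$ in $G_0$ of length $2^{d-1}-1$ starting at $v$, and a path $P_1$ in $G_1$ of length $2^{d-1}-1$ starting at $v'$. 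Concatenating the reverse of $P_0$, the edge $vv'$, and $P_1$ gives a path of length $2(2^{d-1}-1)+1=2^d-1$, and the two halves are automatically vertex-disjoint since they lie in different hyperfaces.

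The key point, and the main potential obstacle, is the combining step: with only the original statement one would find long paths in each half of the split but have no way to join them across any particular cross-edge. Strengthening the inductive hypothesis to prescribe the starting vertex supplies exactly the control needed, since using the two endpoints of the chosen cross-edge as starting vertices allows the two halves to be glued via that edge, and the arithmetic $2(2^{d-1}-1)+1=2^d-1$ lands on precisely the claimed bound.
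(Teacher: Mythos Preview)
First, a framing remark: this theorem is not proved in the present paper at all --- it is quoted from \cite{long} as background, so there is no ``paper's own proof'' to compare against here.

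On the substance of your argument, there is a genuine gap in the inductive step. Your strengthened hypothesis asserts that for every vertex $v$ there is a path of length $2^d-1$ \emph{starting at} $v$. But the path you build --- the reverse of $P_0$, then the edge $vv'$, then $P_1$ --- has $v$ as an \emph{interior} vertex: its endpoints are the far end of $P_0$ (in $H_0$) and the far end of $P_1$ (in $H_1$). So while you have produced a path of length $2^d-1$ in $G$, you have not produced one starting at $v$, and hence you have not re-established the strengthened statement at level $d$. The induction therefore does not close.

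Nor is this easily repaired within the same scheme. The natural fix --- take $P_0$ in $G_0$ starting at $v$, let $u$ be its other endpoint, cross to $u'=u\oplus e_i$, and then run $P_1$ in $G_1$ from $u'$ --- founders because nothing guarantees $uu'\in E(G)$: you only chose $i$ so that $vv'\in E(G)$, and you have no control over which vertex $P_0$ terminates at. Likewise, crossing first at $vv'$ and then trying to return from the end $w'$ of $P_1$ into $G_0$ runs into both the problem that $w'w$ need not be an edge of $G$ and the problem that the resulting path in $G_0$ might revisit $v$. Getting a path that genuinely starts at the prescribed vertex requires more than the bare splitting argument; the proof in \cite{long} uses a more delicate strengthening.
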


\noindent Combining Theorem \ref{previous} with the standard fact that any graph of average 
degree $d$ contains a subgraph with minimum degree at least $d/2$, we see that any subgraph 
$G$ of $Q_n$ with average degree $d$ contains a path of length at least $2^{d/2}-1$.

In this paper we consider the analogous question for geodesics. A path in $Q_n$ is a geodesic 
if no two of its edges have the same direction. Equivalently, a path is a geodesic if it forms 
a shortest path in $Q_n$ between its endpoints. Given a subgraph $G$ of $Q_n$ of average degree 
$d$, how long a geodesic path must $G$ contain? 

It is trivial to see that any such graph must contain a geodesic of length $d/2$. Indeed, taking 
a subgraph $G'$ of $G$ with minimal degree at least $d/2$ and starting from any vertex of $G'$, 
we can greedily pick a geodesic of length $d/2$ by choosing a new edge direction at each step.

On the other hand the $d$-dimensional cube $Q_d$ shows that, in general, we cannot find a 
geodesic of length greater than $d$ in $G$. Our main result is that this upper bound is 
sharp.

%
% Main result
%

\begin{thm}
\label{mainresult}
Every subgraph $G$ of $Q_n$ of average degree $d$ contains a geodesic of length at least $d$.
\end{thm}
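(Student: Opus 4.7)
The plan is to prove, by induction on $n$, the stronger statement that for every subgraph $G$ of $Q_n$,
$$\sum_{v\in V(G)} f_G(v) \;\ge\; 2|E(G)|,$$
where $f_G(v)$ denotes the length of a longest geodesic in $G$ having $v$ as an endpoint. The theorem follows by averaging: some $v$ has $f_G(v)\ge 2|E(G)|/|V(G)|=d$, so $G$ contains a geodesic of length at least $d$.

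The base case $n=0$ is trivial. For the inductive step I split $Q_n$ along the last coordinate into $Q_{n-1}^0$ and $Q_{n-1}^1$, set $G_i=G\cap Q_{n-1}^i$, and let $M$ be the set of cross edges of $G$ (those in direction $n$, which form a matching in $Q_n$). The inductive hypothesis applied inside $Q_{n-1}$ yields $\sum_{v\in V(G_i)} f_{G_i}(v)\ge 2|E(G_i)|$ for $i=0,1$, so my task is to produce the remaining $2|M|$ from the cross edges.

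To do this I would pair up the endpoints of each cross edge. For $(u,u^*)\in M$ with $u\in V(G_0)$ and $u^*\in V(G_1)$, since $G_0\subseteq G$ one has $f_G(u)\ge f_{G_0}(u)$; and by prepending the cross edge to a longest $G_1$-geodesic starting at $u^*$ one also has $f_G(u)\ge 1+f_{G_1}(u^*)$, the direction $n$ being unused by any $G_1$-geodesic. Symmetrically $f_G(u^*)\ge \max(f_{G_1}(u^*),\,1+f_{G_0}(u))$. The elementary inequality
$$\max(a,b+1)+\max(b,a+1)\;\ge\; a+b+2,$$
verified by assuming without loss of generality $a\ge b$, then gives $f_G(u)+f_G(u^*)\ge f_{G_0}(u)+f_{G_1}(u^*)+2$ for every $(u,u^*)\in M$.

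To finish I would decompose $\sum_{v\in V(G)} f_G(v)$ into the contributions of the $|M|$ cross-edge pairs (handled by the paired bound above) and of the remaining vertices (handled by the trivial bound $f_G(v)\ge f_{G_{i(v)}}(v)$). The pieces reassemble to $\sum_{v\in V(G_0)}f_{G_0}(v)+\sum_{v\in V(G_1)}f_{G_1}(v)+2|M|$, and the inductive hypothesis collapses this to $2|E(G_0)|+2|E(G_1)|+2|M|=2|E(G)|$. The step that requires most care is exactly this accounting: the matching pairing must cover each element of $V(G_0)\cup V(G_1)$ incident to $M$ exactly once, and each remaining vertex must be counted only with the trivial term. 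Once this book-keeping is in place, the induction is essentially an algebraic identity.
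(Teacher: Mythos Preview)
Your argument is correct. Both your proof and the paper's establish the same averaging inequality $\sum_v(\text{longest geodesic at }v)\ge 2|E(G)|$, and both hinge on the elementary fact that $\max(a,b+1)+\max(b,a+1)\ge a+b+2$. The difference lies in how the induction is organised and how ``freshness'' of the new edge direction is guaranteed. The paper inducts on $|E(G)|$, removing an edge of \emph{maximal} direction, and works with \emph{increasing} geodesics (edge directions strictly increasing along the path); this ordering device is what ensures the removed edge's direction cannot already occur on the relevant geodesics at its endpoints. You instead induct on the dimension $n$ and split along the last coordinate, so that the cross-edge direction is automatically absent from any geodesic lying in either half; this lets you work with arbitrary geodesics rather than increasing ones. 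Your route is arguably the more natural one for the theorem as stated, while the paper's yields a formally stronger conclusion (an increasing geodesic of length $d$), which it then exploits---via a random reordering of the coordinate directions---to count geodesics and show $G$ contains at least $d!\,|G|/2$ of them.
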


Noting that the endpoints of the geodesic in $G$ guaranteed by Theorem \ref{mainresult} 
are at Hamming distance at least $d$, we see that Theorem \ref{mainresult} extends the following 
result of Feder and Subi \cite{fedsubi}. 

%who proved that there exists \textit {some} path between two vertices at Hamming distance 
%$d$ apart.

%
% Feder Subi
%

\begin{thm}[\cite{fedsubi}]
\label{FS}
{
Every subgraph $G$ of $Q_n$ of average degree $d$ contains two vertices at Hamming 
distance $d$ apart.
}
\end{thm}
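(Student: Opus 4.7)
My plan is to induct on $n$, with base case $n=0$ immediate. For the inductive step, fix a coordinate (say the $n$-th) and split $Q_n$ along it into two copies of $Q_{n-1}$. This decomposes $G$ into induced subgraphs $G_0, G_1$ on the two halves, together with a matching $M$ formed by the direction-$n$ edges of $G$. Writing $d_j$ for the average degree of $G_j$ over the $2^{n-1}$ vertices of its copy of $Q_{n-1}$, and $\mu = |M|/2^{n-1}$ for the matching density, a routine double count gives the identity $d_0 + d_1 + 2\mu = 2d$.

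If $\max(d_0, d_1) \geq d$, the inductive hypothesis applied to the relevant half already produces a geodesic of length $\geq d$ in $G_j \subseteq G$, and we are done. Otherwise $d_0, d_1 \leq d-1$ (focusing on integer $d$), whence $d_0 + d_1 \leq 2d - 2$, and the identity forces $\mu \geq 1$, so $M$ must be a perfect matching and moreover $d_0 = d_1 = d-1$. By induction $G_0$ contains a geodesic $P$ of length $d-1$; since $M$ is perfect, the endpoint of $P$ has an $M$-partner in $G_1$, and because $P$ uses only directions in $[n-1]$, appending this direction-$n$ matching edge extends $P$ to a geodesic of length $d$ in $G$.

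I expect the main obstacle to be the non-integer case. When $d \in (k-1, k)$ for an integer $k$, the same case analysis yields only $\mu \geq 1 - (k-d) > 0$, so $M$ need not be a perfect matching and the endpoint of the $G_0$-geodesic produced by induction may fail to have an $M$-partner. To close this gap I would look to strengthen the inductive hypothesis so as to control the endpoint of the geodesic---for instance by guaranteeing a geodesic of the required length whose endpoint avoids some specified small set, or by combining geodesics produced in $G_0$ and $G_1$ across a common matching edge---or else to use an averaging argument over the $n$ possible splitting coordinates (via $\sum_i m_i = |E(G)|$) to pick a direction along which extending the geodesic becomes straightforward.
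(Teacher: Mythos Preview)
Your identity $d_0+d_1+2\mu=2d$ only holds when $d$ and the $d_j$ denote edge densities averaged over the full (sub)cube, i.e.\ $d=2|E(G)|/2^n$ and $d_j=2|E(G_j)|/2^{n-1}$; with the standard meaning of average degree (over $|V(G)|$) it is false in general. Under your density interpretation the induction only proves the weaker statement ``$2|E(G)|/2^n\ge d$ implies two vertices at distance $\ge d$'', which does not recover Theorem~\ref{FS}. If instead one uses genuine average degrees, the correct bookkeeping is $(d-d_0)|V(G_0)|+(d-d_1)|V(G_1)|=2|M|$; with this, your integer-$d$ argument does go through (from $d_0,d_1\le d-1$ one gets $|M|\ge |V(G)|/2\ge\min(|V(G_0)|,|V(G_1)|)$, forcing every vertex of $G$ to be matched), so the integer case is essentially fine once the identity is repaired.

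The real gap is the one you flag yourself: the non-integer case. For $d\in(k-1,k)$ the split only yields $|M|\ge (d-k+1)|V(G)|/2$, which can be arbitrarily small, and there is no reason the endpoints of the geodesic produced by induction lie in the matched set. The integer case does not imply the general one (distance $\ge\lfloor d\rfloor$ is not distance $\ge d$), and your suggested remedies---controlling endpoints, averaging over splitting directions---are not arguments, just hopes; it is not at all clear that any of them can be made to work. The paper avoids this difficulty entirely by taking a different route: it either deduces Theorem~\ref{FS} from the geodesic theorem, whose proof inducts on $|E(G)|$ by deleting a single edge of largest direction and tracks the aggregate quantity $\sum_v |L_G(v)|$ (this strengthening is exactly what makes the induction close for all $d$), or it gives a direct proof by compressing $V(G)$ to a downset and applying Katona's shadow bound for $t$-intersecting families. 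Both approaches handle arbitrary $d$ uniformly, which your coordinate-split induction, as it stands, does not.
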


We remark that neither Theorem \ref{mainresult} nor Theorem \ref{FS} follow from isoperimetric 
considerations alone. Indeed, if $G$ is a subgraph of $Q_n$ of average degree $d$, by the edge 
isoperimetric inequality for the cube (\cite{bernstein}, \cite{harper}, \cite{hart}, \cite{lindsey}; 
see \cite{com} for background) we have $|G|\geq 2^d$. However if $n$ is large, a Hamming ball of 
small radius may have size larger than $2^d$ without containing a long geodesic.

While Theorem \ref{mainresult} implies Theorem \ref{FS}, we have also given an alternate 
proof of Theorem \ref{FS} from a result of Katona \cite{ka} which we feel may be of interest. 
Both of these proofs will be given in the next section.

Finally, Feder and Subi's theorem was motivated by a conjecture of Norine \cite{norine} on antipodal 
colourings of the cube. In the last section of this short paper we discuss Theorem 
\ref{mainresult} in relation to Norine's conjecture.

%
%
%
%
%
%
% Second Section:
% 
% Proofs
%
%
%
%

\section{Proofs of Theorem \ref{mainresult} and Theorem \ref{FS}}

To prove Theorem \ref{mainresult} we will actually establish a stronger result. Before stating this 
result we need the following definition.

\begin{definition}
A path $P=x_1x_2\ldots x_l$ in $Q_n$ is an \textit{increasing geodesic} if the directions of the edges $x_ix_{i+1}$ increase with $i$. An increasing geodesic $P$ \textit{ends} at a vertex $x$ if $x=x_l$.\end{definition}

Since any increasing geodesic is also a geodesic, to prove Theorem \ref{mainresult} it is enough to show that any subgraph of $Q_n$ of average degree $d$ contains an increasing geodesic of length $d$. 

In fact for our proof we need to show more than this. For any vertex $x\in G$ we let $L_G(x)$ denote an increasing geodesic in $G$ of maximal length which ends at $x$. The key idea to the proof is to show that on average $|L_G(x)|$ is large. This allows us to simultaneously keep track of geodesics for all vertices of $G$, which is vital in the inductive proof below.

\begin{thm}
\label{averagemonotone}
Let $G$ be a subgraph of $Q_n$ of average degree $d$. Then
\begin{equation*}
 \sum _{v\in V(G)} |L_G(v)| \geq d|G|.
\end{equation*}
\end{thm}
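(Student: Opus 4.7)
I would prove this by induction on $n$, the dimension of the ambient cube. The base case $n=0$ is trivial, since $G$ is then either empty or a single vertex and both sides of the inequality vanish. For the inductive step, fix any linear order on the directions and split $Q_n$ along the largest direction $n$: write $Q_n = Q^0 \cup Q^1$ where $Q^0,Q^1$ are the two copies of $Q_{n-1}$ obtained by fixing the $n$th coordinate, set $G_i = G \cap Q^i$, and let $E_c$ denote the set of edges of $G$ in direction $n$. Each vertex of $G$ is incident to at most one edge of $E_c$.

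The key observation is that an increasing geodesic can use direction $n$ only as its final edge. So for any vertex $v \in V(G_0)$ one has $L_G(v) \geq L_{G_0}(v)$, with equality when $v$ has no cross neighbour in $G_1$; and similarly for $v \in V(G_1)$. For each cross edge $vv' \in E_c$ with $v \in V(G_0)$ and $v' \in V(G_1)$, a longest increasing geodesic in $G_1$ ending at $v'$ uses only directions $<n$, so appending the edge $v'v$ produces an increasing geodesic in $G$ ending at $v$ of length $L_{G_1}(v')+1$. Symmetrically one gets an increasing geodesic ending at $v'$ of length $L_{G_0}(v)+1$. Since these two bounds act independently on the two sides of the edge, one obtains the crucial pair inequality
\[
L_G(v) + L_G(v') \;\geq\; L_{G_0}(v) + L_{G_1}(v') + 2
\]
for every $vv' \in E_c$.

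Now I would sum over $V(G)$, partitioning vertices into those not incident to any cross edge (which contribute at least $L_{G_i}(v)$ each) and the pairs from $E_c$ (which contribute via the inequality above). This yields
\[
\sum_{v \in V(G)} L_G(v) \;\geq\; \sum_{v \in V(G_0)} L_{G_0}(v) + \sum_{v \in V(G_1)} L_{G_1}(v) + 2|E_c|.
\]
Applying the induction hypothesis to the subgraphs $G_0, G_1$ of $Q_{n-1}$ (for whatever their own average degrees turn out to be, using the form $\sum L_{G_i}(v) \geq 2e(G_i)$), the right-hand side is at least $2e(G_0) + 2e(G_1) + 2|E_c| = 2e(G) = d|G|$, completing the induction.

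The main obstacle, I expect, is finding the right inductive statement in the first place: a single long geodesic is too brittle to induct on, whereas the summed version allows each cross edge to contribute cleanly. The other slightly delicate point is extracting the full $+2$ per cross edge rather than only $+1$; the trick is to notice that the bonuses $L_G(v) \geq L_{G_1}(v')+1$ and $L_G(v') \geq L_{G_0}(v)+1$ are applied on opposite sides of the edge and therefore do not compete, so one may add them straight up.
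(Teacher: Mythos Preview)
Your proof is correct and rests on the same key observation as the paper's: the largest direction can occur only as the final edge of an increasing geodesic. The packaging, however, is slightly different and in one respect cleaner. The paper inducts on the number of edges, removing a single edge $e=xy$ of maximal direction, applying the hypothesis to $G'=G-e$, and then recovering the missing $+2$ by a two-case analysis according to whether $|L_{G'}(x)|=|L_{G'}(y)|$ (each endpoint gains $1$) or not (the shorter endpoint gains $2$). You instead induct on the ambient dimension $n$ and peel off all top-direction edges at once. Your ``swapped'' pair bound, using $L_G(v)\ge L_{G_1}(v')+1$ together with $L_G(v')\ge L_{G_0}(v)+1$, automatically yields $L_G(v)+L_G(v')\ge L_{G_0}(v)+L_{G_1}(v')+2$ without any case distinction, since the two bonuses live on opposite sides of the cross edge and add directly. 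So your argument is essentially the paper's, reorganised to avoid the case split.
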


Note that it is immediate from Theorem \ref{averagemonotone} that $|L_G(v)|\geq d$ for some 
$v\in V(G)$ and therefore $G$ contains an increasing geodesic of length at least $d$, as claimed.

\begin{proof}
{
Write $S(G)$ for $\sum _{v\in V(G)} |L_G(v)|$. We will show that for any subgraph $G$ of $Q_n$ 
we have $S(G) \geq 2|E(G)|$, by induction on $|E(G)|$. The base case $|E(G)|=0$ is immediate. Assume the result 
holds by induction for all graphs with $|E(G)|-1$ edges and that we wish to prove the result for $G$.

Pick an edge $e=xy$ of $G$ with largest coordinate direction and look at the graph $G'=G-e$. By the induction 
hypothesis we have 
\begin{equation*}
S(G')=\sum _{v\in V(G')} |L_{G'}(v)| \geq 2|E(G')| = 2(|E(G)|-1).
\end{equation*}
Now clearly we must have $|L_G(v)|\geq |L_{G'}(v)|$ for all vertices $v\in G$. Furthermore, notice that the 
coordinate direction of $e$ can not appear on the increasing geodesics $L_{G'}(x)$ and $L_{G'}(y)$. 
Indeed, the edge of $L_{G'}(x)$ adjacent to $x$ has direction less than $e$ and as $L_{G'}(x)$ is 
an increasing geodesic, the directions of all edges in $L_{G'}(x)$ must be less than $e$. We now consider two cases:

%
% Case (i)

\textbf{Case I:} $|L_{G'}(x)|=|L_{G'}(y)|$. Then the paths $L_{G'}(x)xy$ and $L_{G'}(y)yx$ are increasing geodesics in $G$ 
ending at $y$ and $x$ respectively. Therefore $|L_{G}(x)|\geq |L_{G'}(x)|+1$ and $|L_{G}(y)| \geq |L_{G'}(y)|+1$ 
and $S(G)\geq S(G')+2\geq 2|E(G')|+2=2|E(G)|$.

%
% Case (ii)

\textbf{Case II:} $|L_{G'}(x)|\neq |L_{G'}(y)|$. Without loss of generality assume that $|L_{G'}(x)|\geq |L_{G'}(y)|+1$. Then $L_{G'}(x)xy$ is an increasing geodesic ending at $y$ of length $|L_{G'}(x)|+1\geq |L_{G'}(y)| + 2$. Therefore $|L_{G}(y)|\geq |L_{G'}(y)|+2$ and $S(G)\geq S(G')+2\geq 2|E(G')|+2=2|E(G)|$. 

This concludes the inductive step and the proof.
}
\end{proof}

%
%
%
%
%
% End of proof of main result
%
%
%
%
%
%

We now give a strengthening of Theorem \ref{averagemonotone}, showing that $G$ must actually contain \emph{many} 
geodesic of length $d$. First note that for $d\in {\mathbb {N}}$, taking a disjoint 
union of subgraphs isomorphic to $Q_d$ gives a graph $G$ with average degree $d$ and exactly 
$d!|G|/2$ geodesics of length $d$. The following result shows shows that in fact we can guarantee that many 
geodesics of length $d$ for general subgraphs of $Q_n$.

\begin{thm}
 \label{numberofgeodesics}
 If $G$ is a subgraph of $Q_n$ with average degree at least $d\in \mathbb{N}$, then $G$ contains at least 
 $\frac{d!|G|}{2}$ geodesics of length $d$.
\end{thm}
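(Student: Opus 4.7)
The plan is to reduce via an averaging argument over coordinate permutations to a counting strengthening of Theorem~\ref{averagemonotone}.

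\textbf{Reduction via averaging.} For each permutation $\pi \in S_n$, call a length-$d$ geodesic in $Q_n$ \emph{$\pi$-increasing} if its edge directions appear in $\pi$-increasing order along one of its two orientations, and let $N_G(\pi)$ be the number of such geodesics in $G$. Each unoriented length-$d$ geodesic in $G$ is $\pi$-increasing for exactly $2\cdot n!/d!$ permutations $\pi$: each of its two orientations imposes a prescribed relative $\pi$-order on its $d$ directions, giving $n!/d!$ admissible $\pi$'s per orientation, and for $d\ge 2$ the two sets of such $\pi$'s are disjoint. Hence
\[
\sum_{\pi \in S_n} N_G(\pi) \;=\; \frac{2\,n!}{d!}\cdot\bigl(\text{number of geodesics of length $d$ in $G$}\bigr),
\]
and the theorem reduces to the bound $\sum_\pi N_G(\pi) \ge n!\,|V(G)|$. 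By the $S_n$-symmetry of $Q_n$ under coordinate relabelling, this in turn reduces to the single bound $N_G(\mathrm{id}) \ge |V(G)|$ for every subgraph $G$ with average degree at least $d$; that is, $G$ contains at least $|V(G)|$ increasing geodesics of length exactly $d$.

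\textbf{Counting strengthening of Theorem~\ref{averagemonotone}.} Writing $g_G(v, d)$ for the number of increasing geodesics of length $d$ in $G$ ending at $v$, my target is the sharper inequality
\[
\sum_{v \in V(G)} g_G(v, d) \;\ge\; 2|E(G)| - (d-1)|V(G)|,
\]
which under the hypothesis $2|E(G)|\ge d|V(G)|$ gives at least $|V(G)|$, as required (and is tight on the disjoint union of copies of $Q_d$). I would prove it by induction on $|E(G)|$, exactly paralleling the proof of Theorem~\ref{averagemonotone}: remove the edge $e=xy$ of largest coordinate direction to form $G' = G-e$. The same observation used there (that $e$ cannot appear on $L_{G'}(x)$ or $L_{G'}(y)$, because in $Q_n$ each vertex has a unique neighbour in each direction) shows that every increasing geodesic in $G$ using $e$ must have $e$ as its final edge, giving the recursive identity
\[
\sum_v g_G(v, d) \;=\; \sum_v g_{G'}(v, d) + g_{G'}(x, d-1) + g_{G'}(y, d-1).
\]
One then wants the two extra terms to sum to at least $2$ in order to match the linear-in-$|E(G)|$ increment of the target bound.

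\textbf{Main obstacle.} Establishing $g_{G'}(x,d-1) + g_{G'}(y,d-1) \ge 2$ is the crux of the argument and does not follow in the direct way that the increment $+2$ was obtained for $|L_{G'}|$ in Theorem~\ref{averagemonotone}: unlike $|L_{G'}(v)|$, the counting quantity $g_{G'}(v, d-1)$ can vanish---precisely when $|L_{G'}(v)|<d-1$---so the simple Case~I/II dichotomy of Theorem~\ref{averagemonotone} does not carry over when both endpoints of $e$ are ``short'' in $G'$. I expect one closes this gap either by running a simultaneous induction on $(|E(G)|,d)$, using the $d-1$ instance of the inequality to control the contributions of short endpoints, or by strengthening the inductive hypothesis to a coefficient-wise bound on the whole generating polynomial $\sum_{k} g_G(v,k)\,x^k$, so that vertices with small $|L_{G'}|$ still contribute via earlier coefficients rather than the $x^d$ one.
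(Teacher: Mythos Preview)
Your averaging reduction over $S_n$ is exactly what the paper does (stated in expectation form), so the heart of the matter is indeed the claim that $G$ contains at least $|V(G)|$ increasing geodesics of length $d$. There, however, your proposal has a genuine gap: the direct induction on $|E(G)|$ for the inequality $\sum_v g_G(v,d)\ge 2|E(G)|-(d-1)|V(G)|$ requires $g_{G'}(x,d-1)+g_{G'}(y,d-1)\ge 2$ at the removed edge, and as you yourself note this can fail outright when both endpoints are short in $G'$. Neither of your suggested fixes (a two-parameter induction, or a polynomial strengthening) is carried out, and it is not clear either one closes the gap without further ideas.

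The paper avoids this difficulty by a different device. Rather than re-running the induction of Theorem~\ref{averagemonotone}, it uses that theorem as a black box after first \emph{capping} $|L_{G'}(v)|$: iteratively delete any edge lying in at least two increasing geodesics of length $d$; each such deletion destroys at least two such geodesics, and after $a$ deletions one reaches a subgraph $G'$ with no increasing geodesic of length $d+1$, hence $|L_{G'}(v)|\le d$ for every $v$. Applying Theorem~\ref{averagemonotone} to $G'$ gives $\sum_v|L_{G'}(v)|\ge 2e(G')=2e(G)-2a$, and the cap $|L_{G'}(v)|\le d$ forces at least $2e(G)-2a-(d-1)|V(G)|$ vertices $v$ to have $|L_{G'}(v)|=d$, each contributing an increasing geodesic of length $d$ in $G'$. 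Adding back the $\ge 2a$ geodesics destroyed during the deletions yields in total at least $2e(G)-(d-1)|V(G)|\ge|V(G)|$ increasing geodesics of length $d$ in $G$---which is precisely your target inequality. So the inequality you aimed for is correct, but the mechanism that proves it is the deletion-and-cap trick feeding into Theorem~\ref{averagemonotone}, not a parallel edge-by-edge induction.
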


\begin{proof}
 We first use Theorem \ref{averagemonotone} to prove the following claim: $G$ 
 contains at least $|G|$ increasing geodesics of length $d$. To see this, first remove an edge 
 $e$ from $G$ if it lies in at least two increasing geodesics of length $d$. 
 Now repeat this with $G\setminus \{e\}$ and so on until we end up at a subgraph $G'$ of $G$ in which all 
 edges lie in at most one increasing geodesic of length $d$. Let $e(G) = e(G') + a$. Note that, 
 by our removal process, the $a$ edges removed from $G$ remove at least $2a$ increasing geodesics 
 of length $d$. Therefore if $a \geq |G|/2$, then $G$ contains at least $|G|$ increasing geodesics of 
 length $d$. If not, by Theorem \ref{averagemonotone} we have 
\begin{equation}
 \label{countingmonotoneincreasingpathsinG}
\sum _{v\in G'} |L_{G'}(v)| \geq 2 e(G') =  2e(G) - 2a \geq d|G| - 2a = (d-1)|G| + (|G| - 2a).
\end{equation}
 Now note that since no edge of $G'$ is contained in more than one increasing geodesic of length $d$, $G'$ does 
 not contain any increasing geodesics of length  $d+1$. Therefore $|L_{G'}(v)| \leq d$ for all $v\in G'$. 
 By (\ref{countingmonotoneincreasingpathsinG}) this shows that $|L_{G'}(v)| = d$ for at least $|G|-2a$ 
 vertices $v\in G'$. Combining these with the increasing geodesics of length $d$ containing edges 
 from $G\setminus G'$, this shows that $G$ contains at least $2a + (|G| - 2a) = |G|$ increasing 
 geodesics of length $d$, as claimed.

 Now suppose that $G$ contain $L$ geodesics of length $d$. We will show that $L\geq \frac{d!|G|}{2}$. To see this, 
 pick an ordering $\sigma $ of $\{1,\ldots ,n\}$ uniformly at random and consider the geodesics 
 of length $d$ which are increasing with respect to this ordering (i.e. paths in which the 
 edges have directions $\sigma (i_1), \sigma (i_2),\ldots ,\sigma (i_d)$ where $i_1<i_2<\ldots <i_d$). 
 The probability that a fixed geodesic of length $d$ appears as an increasing geodesic with 
 respect to the ordering $\sigma $ is exactly $\frac{2}{d!}$. Taking $X$ to be the random variable 
 which counts the number of increasing geodesics of length $d$ in $G$ (with respect to 
 the ordering $\sigma$), this gives that 
 \begin{equation*}
  \mathbb{E}(X) = \frac{2L}{d!}.
 \end{equation*}
 But by the claim above, $X \geq |G|$ for each choice of $\sigma $. Therefore $L \geq \frac{d!|G|}{2}$, as 
 required.
\end{proof}

%
%
% Proof of Feder-Subi result 
% using the Katona's theorem
%
%

We now give the alternate proof of Theorem \ref{FS}. Note that it is enough to prove 
this theorem for induced subgraphs of $Q_n$, since if the result fails for some graph $G$, it must 
also fail for the induced subgraph of $Q_n$ on vertex set $V(G)$. 

As in \cite{fedsubi}, the following compression operation allows us a further reduction. 
Here we view the vertices of $Q_n$ as elements of $\mathcal{P}[n]$, the power set of $[n]$. 
Given $A\in \mathcal{P}[n]$ and $i\in \{1,\ldots ,n\}$ we let 
\begin{equation*}
{
  C_i(A) = \Big \{ \begin{array}{ll}
         A-i & \mbox{if $i\in A$};\\
        A & \mbox{if $i\notin A$.}\end{array} 
}
\end{equation*}

Given $\mathcal{A}\subset \mathcal{P}[n]$, $C_i(\mathcal{A}):=\{ C_i(A):A\in \mathcal{A}\}\cup \{A:C_i(A)\in \mathcal{A}\}$, 
the down compression of $\mathcal{A}$ in the $i$-direction. A family $\mathcal{A}$ is said to be a downset if 
$C_i(\mathcal{A})=\mathcal{A}$ for all $i\in [n]$. The following lemma shows that we may also assume that 
the vertex set $V(G)$ is a downset.

%
%
%
%
% Compression lemma in Katona proof 
% of Feder Subi
%
%
%

\begin{lem} 
\label{compression}
Let $G$ be an induced subgraph of $Q_n$ on vertex set $\mathcal{A}\subset \mathcal{P}[n]$ and let $i\in \{1,\ldots,n\}$. 
Suppose $G$ has average degree at least $d$ and all vertices $A$ and $B$ of $G$ are at Hamming distance less than $k$. 
Then the same is true for the induced subgraph $G'$ of $Q_n$ with vertex set $C_{i}(\mathcal{A})$.
\end{lem}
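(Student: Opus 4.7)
The plan is to verify three properties separately: $|V(G')|=|V(G)|$, $|E(G')|\geq |E(G)|$, and every pair of vertices of $G'$ lies at Hamming distance less than $k$. The first is immediate from the pair-by-pair description of the compression: each $2$-element slice $\{B,B\cup\{i\}\}$ (with $i\notin B$) is either left alone (when both or neither element lies in $\mathcal{A}$) or has $B\cup\{i\}$ replaced by $B$ (when only $B\cup\{i\}\in\mathcal{A}$), so $|C_i(\mathcal{A})|=|\mathcal{A}|$ and $G'$ has the same vertex count as $G$.

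For the edge count I would split by coordinate direction. An $i$-edge of $G$ is exactly a pair $\{B,B\cup\{i\}\}$ with both elements in $\mathcal{A}$, and the description above shows the same criterion characterises $i$-edges of $G'$, so the number of $i$-edges is preserved. For $j\neq i$ I would restrict to the two-dimensional $4$-cycle spanned by coordinates $i,j$ with the other coordinates fixed; the compression acts on this $4$-cycle by pushing each of the two $i$-pairs down independently, and a short inspection of the $16$ possible configurations of $\mathcal{A}$ on the slice shows that no $j$-edge is ever destroyed and that some are in fact created (for instance, the configuration $\{\{i\},\{j\}\}$ compresses to $\{\emptyset,\{j\}\}$, creating a $j$-edge between them). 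Summing over slices yields $|E(G')|\geq |E(G)|$, so the average degree of $G'$ is at least that of $G$.

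For the Hamming bound I would show that every pair $A,B\in C_i(\mathcal{A})$ admits a lift $A^*,B^*\in\mathcal{A}$ with $d_H(A^*,B^*)\geq d_H(A,B)$; this then gives $d_H(A,B)<k$ by hypothesis. The key observation is that whenever $i\in A\in C_i(\mathcal{A})$ we must have both $A$ and $A\setminus\{i\}$ in $\mathcal{A}$ (otherwise $A$ would not survive compression), and similarly for $B$, which gives slack in choosing the $i$-coordinate of each lift. A short case analysis on the $i$-coordinates of $A,B$ and on whether each of them is freshly compressed then handles all configurations. The main obstacle is the case when $i\in A$, $i\notin B$, and $B$ is freshly compressed (so $B\notin \mathcal{A}$ while $B\cup\{i\}\in\mathcal{A}$): here the naive lift $(A,B\cup\{i\})$ only gives $d_H(A,B\cup\{i\})=d_H(A,B)-1$ and so is too short, but exploiting the slack above one can instead take the lift $(A\setminus\{i\},B\cup\{i\})$, which lies entirely in $\mathcal{A}$ and satisfies $d_H(A\setminus\{i\},B\cup\{i\})=d_H(A,B)$ as required.
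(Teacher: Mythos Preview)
Your proposal is correct and follows essentially the same route as the paper. In particular, your Hamming-distance argument---lifting a bad pair $A,B\in C_i(\mathcal{A})$ back to $\mathcal{A}$, and in the critical case $i\in A$, $i\notin B$, $B\notin\mathcal{A}$ using the slack $A\setminus\{i\}\in\mathcal{A}$ to take the lift $(A\setminus\{i\},B\cup\{i\})$---is exactly the paper's argument. The only cosmetic difference is in the edge count: the paper writes down a single injection $f:E(G)\to E(G')$ (sending $AB$ to $C_i(A)C_i(B)$ when that edge is new, and to $AB$ otherwise), whereas you split by direction and case-check each $\{i,j\}$-square; both are standard and equivalent, though the injection is a bit quicker to write.
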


%
%
%
% Proof of compression lemma
%
%
%

\begin{proof}
{
Since $|G|=|G'|$ in both cases, to see that $G'$ has average degree at least $d$ it suffices to show that 
$G'$ has at least as many edges as $G$. To see this, define a map $f:E(G)\rightarrow E(G')$ given by
\[f(AB) = \left\{ \begin{array}{ll}
         C_i(A)C_i(B) & \mbox{if $A\Delta B\neq \{i\}$ and $C_i(A)C_i(B)\notin E(G)$};\\
        AB & \mbox{otherwise.}\end{array} \right. \]
Noting that $f$ is an injection, it follows that $G'$ has average degree at least $d$.

Suppose for contradiction that $G'$ had two vertices $A'$ and $B'$ at Hamming distance at least $k$ apart. 
Now it is easily seen that exactly one of $A'$ and $B'$ must contain $i$ as otherwise any pair $A,B\in \A $
with $C_i(A)=A'$ and $C_i(B)=B'$ are at Hamming distance at least $k$ apart. Assume that $i\in A'$, $i\notin B'$. 
Now $A'\in C_i(\mathcal{A})$ implies that $A'-i,A'\in \mathcal{A}$. Since $A'\in \A$, $B'\notin \A $ and 
we have $B'\in C_i(\A )\backslash \A $. This implies $B'\cup \{i\}\in \A $. But then $A'-i, B'\cup \{i\}\in \A $ 
are at Hamming distance at least $k$, a contradiction.
}
\end{proof}

%
%
%
% Defining shadows and $t$-interesecting families
%
%
%
%

As mentioned in the Introduction, our alternate proof of Theorem \ref{FS} is based on a theorem of Katona. 
Before stating this theorem we first need a definition.

\begin{definition}
{
Given a set system $\mathcal{A}\subset {[n]}^{(k)}$, the {\textit {shadow}} of $\mathcal{A}$ is 

\begin{equation*}
{
\partial{(\mathcal{A}) } :=\{ B\in {[n]}^{(k-1)}: B\subset A \mbox{ for some } A\in \A \}
}
\end{equation*}

The set $\partial ^{(l)}(\A)$ is defined as $\partial ^{(l)}(\A):=\overbrace{ \partial(\cdots ({\partial} (\A))\cdots )}^l$.
}
\end{definition}
\vspace{2mm}

While in general the shadow $\partial {\mathcal A}$ of $\mathcal A\subset \mathcal{P}{[n]}$ can be much smaller than $|\mathcal A|$, 
a result of Katona \cite{ka} shows that if $\mathcal A$ is also an intersecting family then $|\partial {\mathcal A}|\geq |{\mathcal A}|$. 
More generally, Katona also gave lower bounds on the size of $|\partial ^{(l)}{(\mathcal{\A })}|$ for $t$-intersecting families $\mathcal A$. 
We will need the following special case.

%
%
%
%
% Statement of Katona's theorem
%
%
%
%

\begin{thm}[Katona]
{
\label{Katona}
Let $k, t\in {\mathbb {N}}$. Suppose that $\A\subset {[n]}^{(k)}$ is $t$-intersecting. Then
\begin{equation*}
{
|\partial ^{(t)}({\mathcal{A}})|\geq |{\mathcal {A}}|
}
\end{equation*}
}
\end{thm}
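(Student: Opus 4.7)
My plan is to follow the cyclic arrangement method that Katona pioneered for this type of result. Fix a cyclic ordering $\sigma$ of $[n]$, and call a subset of $[n]$ an \emph{arc} (with respect to $\sigma$) if its elements form a contiguous block on the cycle. For each $\sigma$, let $a(\sigma)$ be the number of $k$-arcs of $\sigma$ lying in $\mathcal{A}$ and $b(\sigma)$ the number of $(k-t)$-arcs of $\sigma$ lying in $\partial^{(t)}(\mathcal{A})$. The plan is to establish a per-cycle inequality between these counts and then average over $\sigma$.

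For the per-cycle inequality, the $t$-intersecting hypothesis forces any two $k$-arcs of $\sigma$ in $\mathcal{A}$ to share at least $t$ consecutive positions on the cycle, so their starting positions lie within a window of length at most $k-t$; in particular $a(\sigma) \leq k-t+1$. Each $k$-arc in $\mathcal{A}$ has exactly $t+1$ distinct $(k-t)$-sub-arcs, and, combining the sub-arcs over all $k$-arcs of $\mathcal{A}$, they fill a contiguous block of $(k-t)$-arcs of length at least $a(\sigma)+t$; every one of them lies in $\partial^{(t)}(\mathcal{A})$. This gives $b(\sigma) \geq a(\sigma)+t$ whenever $a(\sigma) \geq 1$, and trivially $b(\sigma) \geq a(\sigma)$ otherwise.

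To conclude, I would sum the per-cycle inequality over all $(n-1)!$ cyclic orderings. A fixed $k$-set is an arc in exactly $k!(n-k)!$ cyclic orderings (choose its block location, permute inside, permute outside), and analogously each $(k-t)$-set is an arc in $(k-t)!(n-k+t)!$ cyclic orderings, so $\sum_\sigma a(\sigma) = |\mathcal{A}|\,k!(n-k)!$ and $\sum_\sigma b(\sigma) = |\partial^{(t)}(\mathcal{A})|\,(k-t)!(n-k+t)!$. Rearranging then yields a shadow bound. The main obstacle is that the bare step $b(\sigma) \geq a(\sigma)$ only produces $|\partial^{(t)}(\mathcal{A})| \geq |\mathcal{A}| \cdot \binom{n}{k}/\binom{n}{k-t}$, which is sharp for $n \leq 2k-t$ but too weak when $n$ is large. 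Closing the gap requires upgrading the per-cycle inequality to also incorporate those $(k-t)$-arcs of $\sigma$ that arise as sub-arcs of $k$-sets in $\mathcal{A}$ which are \emph{not themselves} arcs of $\sigma$; these contribute to $b(\sigma)$ without increasing $a(\sigma)$, and, combined with the $t$-surplus above, should deliver the sharp bound $|\partial^{(t)}(\mathcal{A})| \geq |\mathcal{A}|$.
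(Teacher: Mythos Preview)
The paper does not prove this theorem; it is quoted from Katona's 1964 paper and used as a black box in the alternate proof of Theorem~\ref{FS}. So there is no in-paper argument to compare your attempt against.

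Evaluating your proposal on its own merits, there is a genuine gap. First a small slip: averaging $b(\sigma)\geq a(\sigma)$ over all cyclic orderings gives
\[
|\partial^{(t)}(\A)|\;\geq\;|\A|\cdot\frac{k!\,(n-k)!}{(k-t)!\,(n-k+t)!}\;=\;|\A|\cdot\frac{\binom{n}{k-t}}{\binom{n}{k}},
\]
the reciprocal of the ratio you wrote; this is at least $|\A|$ precisely when $n\leq 2k-t$ and becomes arbitrarily weak as $n$ grows. Using the surplus $b(\sigma)\geq a(\sigma)+t$ on cycles with $a(\sigma)\geq 1$ helps only marginally and still fails for large $n$. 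The decisive problem is your last paragraph: you assert that counting $(k-t)$-arcs contained in non-arc members of $\A$ ``should deliver the sharp bound'', but you give no mechanism, and none is available at the per-cycle level. For a fixed cyclic order $\sigma$, a set $A\in\A$ that is not an arc of $\sigma$ need not contain \emph{any} $(k-t)$-arc of $\sigma$ (take $A$ with no $k-t$ elements $\sigma$-consecutive), so there is no lower bound on this contribution in terms of $|\A|$. In fact, even the most optimistic per-cycle estimate $b(\sigma)=n$ whenever $a(\sigma)\geq 1$ would, after averaging, yield a bound that tends to $0$ for fixed $k$, $t\geq 2$ and $n\to\infty$. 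The circle method localises the $t$-intersecting hypothesis to arcs and discards exactly the global information needed when $n>2k-t$.

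Katona's original proof, and the standard textbook treatments, proceed by an entirely different route---either a direct combinatorial argument after compressing $\A$, or by deducing the statement from the Kruskal--Katona shadow theorem---rather than by cyclic averaging.
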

\vspace{2mm}

%
%
%
%
% Proof of Feder Subi result 
% from Katona's theorem
%
%
%
%
%

\textit {Proof of Theorem \ref{FS}.} 
Suppose for contradiction the result is false and let ${\mathcal A}$ be the vertex set of $G$. 
Using Lemma \ref{compression} we may assume that $\mathcal{A}$ is down-compressed. 

Let ${\A} ^{(k)}=\A \cap {[n]}^{(k)}$ for all $k\in [n]$. Since $\mathcal{A}$ is down-compressed we must 
have $\A ^{(k)}=\emptyset $ for all $k\geq d$. Also since $\mathcal{A}$ is down-compressed, for each 
$A\in \mathcal{A}$, the number of neighbours of $A$ which lie below $A$ in $G$ is $|A|$. Therefore

\begin{equation}
{
\label{equality}
\sum_{k=0}^{\lceil d\rceil -1}k|\mathcal {A} ^{(k)}| = \sum_{A\in \A}|A| =  \frac{d|\A|}{2}.
}
\end{equation}

Furthermore, again by compression, for $k\geq \frac{d}{2}$, $\A ^{(k)}$ does not contain two vertices 
$A$ and $B$ with $|A\cup B|\geq d$. Therefore ${\mathcal A} ^{(k)}$ must be $(2k-\lceil d\rceil +1)$-intersecting. 
Applying Theorem \ref{Katona} we therefore have

\begin{equation}
\label{degreesum}
{
|\partial ^{(2k-\lceil d\rceil +1)}(\mathcal{A}^{(k)}|\geq |\A ^{(k)}|.
}
\end{equation}

But as $\mathcal {A}$ is down-compressed 
\begin{equation*}
 \partial ^{(2k-\lceil d\rceil +1)}(\A ^{(k)}) \subset \A ^{(\lceil d\rceil -k -1)}.
\end{equation*}
We now pair the contributions from ${\A} ^{(k)}$ and $\A ^{(\lceil d\rceil -k-1)}$ to (\ref{equality})
together for all $k\geq (\lceil d\rceil -1)/2$ using (\ref{degreesum}):

\begin{equation*}
\begin{split}
k|\A ^{(k)}| + (\lceil d\rceil -k -1)|\A ^{(\lceil d\rceil -k -1)}|
& =
(\lceil d\rceil - 1)/2|\A ^{(k)}| + (k- (\lceil d\rceil -1)/2)|\A ^{(k)}|\\ 
& + (\lceil d\rceil -1)/2|\A ^{(\lceil d\rceil -k-1)}| + ((\lceil d\rceil -1)/2 -k)|\A ^{(\lceil d\rceil -k-1)}|\\
& \leq (\lceil d\rceil -1) /2(|\A ^{(k)}|+|\A ^{(\lceil d\rceil -k-1)}|).
\end{split}
\end{equation*}

%
%
% Last line in the proof of FS
%
%

But summing over $k\geq (\lceil d\rceil -1)/2$ this contradicts (\ref{equality}) above. This proves the theorem. 

\hspace {15cm}\qquad \qquad $\square $

%
%
%
%
%
%
%
%
%
% Final section:
%
% Concluding Remarks
%
%
%
%
%
%
%
%

\section{Concluding Remarks}

We now discuss the relation of Theorem \ref{mainresult} with Norine's conjecture (see \cite{norine}) mentioned 
in the Introduction. Given a vertex $x\in Q_n$, its antipodal vertex $x' \in Q_n$ is the unique vertex with all 
coordinate entries differing from those of $x$. Also, given an edge $e=xy$ of $Q_n$, its antipodal edge $e'=x'y'$ where 
$x'$ is antipodal to $x$ and $y'$ is antipodal to $y$. Finally, a $2$-colouring of the edges of $Q_n$ is said to be 
\emph{antipodal} if no two antipodal edges receive the same colour.

\begin{conjecture}[Norine]
{
For $n\geq 2$, any antipodal colouring of $E(Q_n)$ contains a monochromatic path between two antipodal points. 
}
\end{conjecture}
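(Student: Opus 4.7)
The plan is to combine Theorem~\ref{mainresult} with the rigidity of antipodal colourings. Fix such a colouring $c:E(Q_n)\to\{R,B\}$ and write $R$ and $B$ for the red and blue edge-subgraphs of $Q_n$. Since antipodal edges always receive different colours, in each direction $i\in[n]$ the $2^{n-1}$ edges of that direction split into $2^{n-2}$ red and $2^{n-2}$ blue ones, so both $R$ and $B$ contain exactly $n\cdot 2^{n-2}$ edges on $2^n$ vertices --- average degree exactly $n/2$. Applying Theorem~\ref{mainresult} to $R$ already delivers a monochromatic geodesic of length at least $n/2$ in $Q_n$.

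To push from length $n/2$ to length $n$ and to control the endpoints, the natural device is the antipodal involution $\iota:x\mapsto x'$, which swaps $R$ and $B$ edge-by-edge. Consequently any red geodesic $P$ from $u$ to $v$ has a blue mirror $\iota(P)$ from $u'$ to $v'$ of the same length. The plan is to run Theorem~\ref{averagemonotone} on $R$ and $B$ simultaneously, pairing the quantities $|L_R(v)|$ and $|L_B(v')|$ via $\iota$, and to argue that if no monochromatic geodesic joined a pair of antipodal vertices then the coupled sum $\sum_v(|L_R(v)|+|L_B(v')|)$ would be strictly less than the lower bound $n\cdot 2^n$ supplied by Theorem~\ref{averagemonotone}. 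A secondary tactic is induction on $n$: split $Q_n$ into two copies $Q_{n-1}^0$ and $Q_{n-1}^1$ along the last coordinate, apply the inductive hypothesis to each half (whose restricted colourings are antipodally linked within $Q_{n-1}$), and glue monochromatic antipodal paths across a judiciously chosen direction-$n$ edge of the correct colour.

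The main obstacle is that Theorem~\ref{mainresult} falls short by a factor of two: it delivers length $n/2$ where the conjecture requires length $n$. Any successful argument must genuinely use antipodal symmetry, not merely edge-averaging, to force a long geodesic both to attain length $n$ \emph{and} to terminate at the antipode of its start. I expect the essential new input will be a strengthening of Theorem~\ref{averagemonotone} that tracks the endpoint distribution of the maximal increasing geodesics, so that in the antipodally symmetric setting one can guarantee at least one vertex $v$ with a full-length increasing geodesic ending at $v'$. Without such endpoint control the pairing and inductive gluing both break down, and I suspect this is why Norine's conjecture has resisted direct attack via the geodesic results of Section~2 alone.
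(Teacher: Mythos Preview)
The statement you are attempting is Norine's \emph{conjecture}; the paper does not prove it. It is presented in Section~3 as an open problem, and the authors explicitly say they were unable to settle even the weaker question of whether every $2$-colouring of $E(Q_n)$ admits an antipodal path with $o(n)$ colour changes. So there is no ``paper's own proof'' to compare your proposal against.

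Your write-up is in fact consistent with this: you correctly observe that Theorem~\ref{mainresult} applied to the red (or blue) subgraph yields a monochromatic geodesic of length $n/2$, which is exactly the content of the Corollary in Section~3, and you then honestly diagnose why the argument stalls --- the factor-of-two shortfall in length and the lack of any control on where the geodesic terminates. Both obstacles are genuine, and your concluding paragraph is an accurate assessment of why the results of Section~2 do not, on their own, resolve Norine's conjecture. The ``coupled sum'' and ``inductive gluing'' plans you sketch are reasonable heuristics, but as you yourself note, neither goes through without a new idea controlling endpoints, and no such idea is supplied in the paper (or, to my knowledge, anywhere else). In short: your proposal is not a proof, but it is not meant to be one, and the paper contains no proof either.
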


Note that this is not true for general $2$-colourings of $E(Q_n)$, as can be seen by colouring all edges in directions 
$\{1,\ldots n-1\}$ red and edges in direction $n$ blue. In \cite{fedsubi}, Feder and Subi made the following conjecture 
for general $2$-colourings of $E(Q_n)$:

\begin{conjecture}[Feder-Subi]
\label{generalcolouring}
{
For every $2$-colouring of $E(Q_n)$ there exists a path between some pair of antipodal vertices which changes colour 
at most once.
}
\end{conjecture}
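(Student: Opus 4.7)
The plan is to try to deduce the conjecture from Theorem \ref{mainresult} by applying it to each colour class. In any 2-colouring of $E(Q_n)$ the red subgraph $R$ and blue subgraph $B$ have average degrees summing to $n$, so at least one of them, say $R$, has average degree at least $n/2$. Theorem \ref{mainresult} then produces a red geodesic of length at least $n/2$, say from $u$ to $v$, using some set $D$ of coordinate directions. To finish, it would suffice to find a blue path from $v$ to the antipode $u'$ of $u$ that flips precisely the coordinates in $[n]\setminus D$; concatenating this with the red geodesic would yield an antipodal path with a single colour change.

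A natural strengthening to aim for is a simultaneous version of Theorem \ref{averagemonotone} for the pair $(R,B)$. For a linear order $\sigma$ on $[n]$ let $L_R^\sigma(v)$ denote the length of the longest $\sigma$-increasing red geodesic ending at $v$; define $L_B^{\sigma'}(v)$ analogously with respect to the reverse order $\sigma'$. Applying Theorem \ref{averagemonotone} to each colour class and summing, together with $d(R)+d(B)=n$, gives
\[
\sum_{v\in V(Q_n)}\bigl(L_R^\sigma(v)+L_B^{\sigma'}(v)\bigr)\;\ge\; n\cdot 2^n,
\]
so some $v$ satisfies $L_R^\sigma(v)+L_B^{\sigma'}(v)\ge n$. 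Reversing the blue geodesic gives a red-then-blue path of length at least $n$ passing through $v$; the key remaining task is to ensure that the red and blue direction sets are disjoint, and hence complementary, which would force the endpoints to be antipodal and the path to have a single colour change. The natural first step would be to average over the ordering $\sigma$, as in the proof of Theorem \ref{numberofgeodesics}, in the hope that some choice makes the two direction sets complementary at some vertex.

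An alternative route is induction on $n$: slice $Q_n$ into the two $Q_{n-1}$-copies $Q_{n-1}^{0}$ and $Q_{n-1}^{1}$ joined by the direction-$n$ matching, apply the inductive hypothesis to the two induced colourings, and try to combine the resulting antipodal paths in each half with a single crossing edge to form a $Q_n$-antipodal path. The book-keeping here is delicate because $Q_n$-antipodes never lie in the same slice, so the inductive hypotheses must be glued rather than simply transferred.

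The main obstacle --- and the reason this remains a conjecture rather than a theorem of the paper --- is that both approaches above treat the two colour classes essentially independently and recover only their individual degree information, whereas the conjecture asks for a tight alignment of the two classes through the antipodal map. The proof of Theorem \ref{mainresult} gives no control over which directions the guaranteed geodesic uses, so the desired complementarity of direction sets does not come for free; and any averaging over colour classes looks wasteful against the exact $n$-vs-$n$ split the antipodal condition demands. Bridging this gap, probably by isolating the role of the antipodal involution via some parity or topological invariant on the colouring, is where I expect the real difficulty to lie.
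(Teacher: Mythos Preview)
The statement you are asked to prove is Conjecture~\ref{generalcolouring}, which the paper explicitly presents as an \emph{open} conjecture of Feder and Subi; there is no proof of it in the paper to compare against. Indeed, the authors remark that they could not even establish the much weaker statement that some antipodal pair is joined by a path with $o(n)$ colour changes (Question~3.4). Your proposal is therefore not being measured against an existing argument, and you yourself correctly identify that what you have written is not a proof but an outline of natural attacks together with the reasons they stall.

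Your diagnosis of the obstruction is accurate. The averaging argument via Theorem~\ref{averagemonotone} does yield a vertex $v$ with $L_R^{\sigma}(v)+L_B^{\sigma'}(v)\ge n$, but nothing in that theorem controls \emph{which} directions the two geodesics use, so one cannot force the two direction sets to be complementary; without that, the concatenated path need not reach an antipodal pair (and need not even be a geodesic, since the last red direction may exceed the first blue direction in the order $\sigma$). Averaging over $\sigma$ as in Theorem~\ref{numberofgeodesics} does not obviously help, because the constraint ``complementary direction sets at a common vertex'' is a hard combinatorial coincidence rather than an averaged quantity. Likewise, the inductive slicing idea founders exactly where you say: antipodal pairs in $Q_n$ straddle the two halves, so the inductive antipodal paths in each $Q_{n-1}$ do not directly produce one in $Q_n$. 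In short, your write-up is a fair summary of why the conjecture is hard, not a proof of it, and the paper offers nothing stronger.
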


It is easily seen that if Conjecture \ref{generalcolouring} is true, it implies Norine's conjecture. Indeed, 
given an antipodal colouring of $Q_n$ take the path $P$ guaranteed by Conjecture \ref{generalcolouring} between 
two antipodal vertices in $Q_n$. Combining $P$ with its antipodal path $P^A$ then gives that some two antipodal 
vertices on $PP^A$ must be joined by a monochromatic path.

In \cite{fedsubi} Feder and Subi proved that every $2$-colouring of $E(Q_n)$ contains a monochromatic path between two 
vertices at Hamming distance $\lceil n/2\rceil $. Using Theorem \ref{mainresult} in place of Theorem \ref{FS} the following 
shows that we can actually take this path to be a geodesic.

\begin{cor}
Every $2$-colouring $c$ of $E(Q_n)$ contains a monochromatic geodesic of length $\lceil n/2\rceil $. 
\end{cor}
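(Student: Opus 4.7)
The plan is to apply Theorem \ref{mainresult} to the larger of the two colour classes. Write $G_R$ and $G_B$ for the spanning subgraphs of $Q_n$ consisting of the red and blue edges respectively. Since every vertex $v \in Q_n$ has degree $n$, its red-degree and blue-degree sum to $n$, so the average degrees $d_R$ and $d_B$ of $G_R$ and $G_B$ (over the full vertex set $\{0,1\}^n$) satisfy $d_R + d_B = n$. By pigeonhole one of them, say $G_R$, has average degree at least $n/2$.

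Applying Theorem \ref{mainresult} to $G_R$ produces a monochromatic (red) geodesic in $Q_n$ of length at least $n/2$. Since the length of a path is an integer, this length is in fact at least $\lceil n/2 \rceil$, which is exactly what the corollary demands.

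There is essentially no obstacle here: Theorem \ref{mainresult} already does the heavy lifting, and the only thing to check is the averaging step plus the trivial observation that an integer which is at least $n/2$ is at least $\lceil n/2 \rceil$. Care should be taken to state that $G_R$ and $G_B$ are taken as spanning subgraphs on all $2^n$ vertices of $Q_n$ (so that isolated vertices are included when computing the average degree), since otherwise the identity $d_R + d_B = n$ could fail.
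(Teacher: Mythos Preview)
Your proof is correct and essentially the same as the paper's: both simply apply Theorem~\ref{mainresult} to a monochromatic subgraph of average degree at least $n/2$ and round up. The only cosmetic difference is that the paper first passes to a monochromatic connected \emph{component} of average degree $\geq n/2$, whereas you apply the theorem directly to the full spanning colour class on all $2^n$ vertices---if anything, your formulation is slightly more direct, and your explicit remark about including isolated vertices so that $d_R+d_B=n$ holds is a nice point of care.
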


\begin{proof}
Pick a monochromatic connected component $C$ of the colouring with average degree at least $n/2$ and apply Theorem \ref{mainresult} 
to it.
\end{proof}

This suggests that in both of the conjectures above, one can additionally ask for the path between antipodal vertices to be a 
geodesic.

\begin{conjecture} The following statements hold:

\begin{itemize} 
\item[\textbf{A}] {Every antipodal colouring $c$ of $E(Q_n)$ contains a monochromatic geodesic between some pair of antipodal vertices.}
\item[\textbf{B}] {In every $2$-colouring $c$ of $E(Q_n)$, there is a geodesic between antipodal points which changes colour at most once.} 
\end{itemize}
\end{conjecture}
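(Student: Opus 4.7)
The plan is to attempt part B, since it implies part A by the reflection argument already noted in the excerpt: given a one-colour-change geodesic $P$ from $x$ to the antipode $x'$, concatenating $P$ with its antipodal image $P^A$ produces a walk through two antipodal pairs, one of which must then be joined by a monochromatic subpath.

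The strategy for part B is to build a geodesic from $x$ to $x' = x \oplus \mathbf{1}$ as a concatenation of a red increasing geodesic from $x$ to a midpoint $y$ (with respect to some ordering $\sigma$ of $[n]$) followed by a blue increasing geodesic from $y$ to $x'$. If the red part uses exactly the directions $\sigma(1),\ldots,\sigma(j)$ and the blue part uses $\sigma(j+1),\ldots,\sigma(n)$, the concatenation is automatically a geodesic of length $n$ from $x$ to $x'$ with at most one colour change, and the midpoint $y$ is forced. So the problem reduces to exhibiting $x$, $\sigma$, $j$ and $y$ for which both halves are present.

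The natural input is Theorem \ref{averagemonotone} applied separately to the red subgraph $R$ and the blue subgraph $B$. Since $\deg_R(v)+\deg_B(v)=n$ at every $v\in Q_n$, summing the two inequalities gives
\begin{equation*}
\sum_{v \in V(Q_n)} \bigl(|L_R(v)| + |L_B(v)|\bigr) \;\geq\; 2|E(R)| + 2|E(B)| \;=\; n \cdot 2^n,
\end{equation*}
so on average $|L_R(v)|+|L_B(v)|\geq n$ over $v\in Q_n$. Reversing the order on $B$ converts a blue increasing geodesic ending at a vertex into a blue increasing geodesic starting there, so a generic $v$ is the meeting point of a long red prefix and a long blue suffix. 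Averaging over $\sigma$ in the style of Theorem \ref{numberofgeodesics} would then make this precise as an expectation.

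The main obstacle, and the reason the conjecture remains open, is the \emph{direction-disjointness} requirement: Theorem \ref{averagemonotone} gives no control over which coordinate directions appear on the maximal increasing geodesics, and the red geodesic to $y$ may share directions with the blue geodesic from $y$ rather than being complementary. Overcoming this looks to need either a refined version of Theorem \ref{averagemonotone} that tracks the set of directions used, or a genuinely new ingredient such as a two-colour isoperimetric inequality adapted to geodesics. A realistic partial target would be a one-colour-change geodesic between vertices at Hamming distance $(1-o(1))n$, which would already improve substantially on the $\lceil n/2 \rceil$-length monochromatic geodesic given by the preceding corollary.
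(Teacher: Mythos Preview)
The statement you are addressing is a \emph{conjecture}, not a theorem: the paper explicitly says ``Unfortunately we were not able to settle either of these conjectures.'' There is therefore no proof in the paper to compare your proposal against. What the paper \emph{does} prove, in the Proposition immediately following, is only that \textbf{A} and \textbf{B} are equivalent; your opening paragraph sketches the \textbf{B}$\Rightarrow$\textbf{A} direction of that equivalence correctly and in the same spirit as the paper's argument.

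Your proposal is honest about its status: it is not a proof but an outline of a plausible strategy together with a diagnosis of why it stalls. The averaging argument via Theorem~\ref{averagemonotone} is natural and the obstruction you name --- that the red increasing geodesic ending at $v$ and the blue increasing geodesic starting at $v$ need not use disjoint direction sets, so their concatenation need not be a geodesic --- is exactly the gap. That gap is genuine and is precisely why the problem remains open; nothing in the paper's toolkit (Theorems~\ref{averagemonotone} or~\ref{numberofgeodesics}) controls which directions appear, only how many. So your proposal correctly identifies both a reasonable line of attack and the missing ingredient, but it does not (and does not claim to) resolve the conjecture.
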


Unfortunately we were not able to settle either of these conjectures. In fact, surprisingly, we were not even able to 
establish that in every $2$-colouring of $E(Q_n)$ some two antipodal vertices are joined by a path which changes 
colour $o(n)$ times. Is this true? 

\begin{question} 
{
Is it true that for every $2$-colouring of $E(Q_n)$, there exist two antipodal vertices 
$x$ and $x'$ that are joined by a path that changes colour $o(n)$ times?
}
\end{question}

While we were not able to prove either \textbf{A} and \textbf{B}, our final result shows that are equivalent.

%
%
%
% Equivalence of geodesic conjectures
%
%
%
\begin{prop}
\textbf{A} holds for all $n$ if and only if \textbf{B} holds for all $n$.
\end{prop}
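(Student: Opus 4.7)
For $\mathbf{B} \Rightarrow \mathbf{A}$ I will argue in the spirit of the Feder--Subi-to-Norine deduction from the introduction, carried out at the level of geodesics. Given an antipodal colouring $c$ of $Q_n$, apply $\mathbf{B}$ to obtain a geodesic $P = v_0 v_1 \cdots v_n$ between antipodes which changes colour at most once. If $P$ is monochromatic then $\mathbf{A}$ is immediate. Otherwise split $P$ into a red prefix of length $j$ and a blue suffix of length $n-j$. The antipodal path $v_0' v_1' \cdots v_n'$ has edges which are the antipodes of those of $P$, and since $c$ is antipodal it inherits the reverse colour pattern: blue prefix of length $j$ and red suffix of length $n-j$. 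Because $v_n = v_0'$, the blue suffix of $P$ concatenated with the blue prefix of the antipodal path yields a length-$n$ path from $v_j$ to $v_j'$ which is entirely blue. This path uses the same $n$ distinct directions as $P$, so it is a monochromatic geodesic between antipodes, giving $\mathbf{A}$.

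For $\mathbf{A} \Rightarrow \mathbf{B}$, the strategy is to lift a 2-colouring $c$ of $E(Q_n)$ to an antipodal colouring $\tilde c$ of $E(Q_{n+1})$, apply $\mathbf{A}$, and extract the desired object. Writing $Q_{n+1} = Q_n \times \{0,1\}$ with coordinate $n+1$ in the second factor, define
\begin{equation*}
\tilde c((v,0)(w,0)) = c(vw), \qquad \tilde c((v,1)(w,1)) = 1 - c(v'w')
\end{equation*}
on horizontal edges, and colour the vertical edges $(v,0)(v,1)$ so that antipodal pairs receive opposite colours (possible since no vertex of $Q_n$ is self-antipodal for $n \geq 1$). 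A routine check, using $(v')' = v$, shows $\tilde c$ is antipodal. Applying $\mathbf{A}$ to $\tilde c$ produces a monochromatic, WLOG red, geodesic $\tilde P$ in $Q_{n+1}$ from $(a,0)$ to $(a',1)$ for some $a \in Q_n$, using the vertical direction exactly once at an edge $(b,0)(b,1)$. This splits $\tilde P$ into a bottom segment of length $j$ (which projects to a red path $a \to b$ in $Q_n$) and a top segment of length $n-j$ whose edges' antipodes are blue under $c$ by the antipodal-flip definition of $\tilde c$ on the top face, so whose antipodal image is a blue path $b' \to a$ in $Q_n$. Concatenating produces a length-$n$ path from $b'$ to $b$ between antipodes in $Q_n$; since $\tilde P$ uses all $n$ horizontal directions distinctly, this concatenation is a geodesic, and it changes colour at most once, at $a$, giving $\mathbf{B}$.

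The main obstacle is identifying the correct extension $\tilde c$. Naive attempts (mirroring $c$ across to the top face, or flipping it uniformly) fail to be antipodal, while the antipodal flip $\tilde c((v,1)(w,1)) = 1 - c(v'w')$ achieves both antipodality and the correct decoding. A closely related subtle point, forced by the extension's definition, is that the top segment of $\tilde P$ must be interpreted via the antipodal map rather than by forgetting the last coordinate.
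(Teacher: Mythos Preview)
Your proof is correct and follows essentially the same approach as the paper. For $\mathbf{B}\Rightarrow\mathbf{A}$ your argument is identical to the paper's concatenation $P_bP_r^A$; for $\mathbf{A}\Rightarrow\mathbf{B}$ the paper simply says ``pick any antipodal colouring of $E(Q_{n+1})$ agreeing with $c$ on $E(Q_n)$'', but since antipodality forces the top-face colouring to be exactly your $1-c(v'w')$, your explicit $\tilde c$ is the same object, and your decoding via the $Q_n$-antipodal map on the top segment is precisely the paper's restriction of the cycle $PP^A$ to the bottom face.
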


\begin{proof}
{
First assume that \textbf{A} is true and let $c$ be a $2$-colouring of $E(Q_n)$. View $Q_n$ as the subcube of $Q_{n+1}$ 
consisting of all $0-1$ vectors of length $n+1$, $(x_1,x_2,\ldots ,x_{n+1})$ with $x_{n+1}=0$. Pick any antipodal colouring 
$c'$ of $E(Q_{n+1})$ which agrees with $c$ on $E(Q_n)$. \textbf{A} now guarantees $c'$ has a monochromatic geodesic  
$P$ between two antipodal vertices of $Q_{n+1}$. Let $P^A$ denote the geodesic formed by the edges antipodal to $P$. Since 
$c'$ is antipodal, $P^A$ must also be monochromatic and of opposite colour to $P$. The restriction of the cycle $PP^A$ to our 
original subcube $Q_n$ now gives a geodesic between two antipodal vertices (in $Q_n$) which changes colour at most once, 
i.e. \textbf{B} is true.

Now assume that \textbf{B} is true and let $c$ be an antipodal $2$-colouring of $E(Q_n)$. Applying \textbf{B} 
to $c$ we obtain a geodesic $P$ between two antipodal vertices which changes colour at most once. Let $P=P_rP_b$ where 
$P_r$ is a red geodesic and $P_b$ is a blue geodesic. But since $c$ is antipodal $P_r^A$ is a blue geodesic 
and $P_bP_r^A$ is a blue geodesic between antipodal vertices, i.e. \textbf{A} is true.
} 
\end{proof}

\end{document}